\newtheorem{theorem}{Theorem}
\theoremstyle{plain}
\newtheorem{lemma}{Lemma}
\numberwithin{equation}{section}
\begin{document}
\title[Generalized asymptotic Euler's relation]{Generalized asymptotic Euler's relation \\ for certain families of polytopes}
\author{L\'{a}szl\'{o} Major}\address[]{L\'{a}szl\'{o} Major \newline\indent Institute of Mathematics \newline\indent Tampere University of Technology \newline\indent PL 553, 33101 Tampere, Finland}\email[]{laszlo.major@tut.fi}

\date{Jul 6, 2011}
\keywords{prism, pyramid, stacked polytope, cyclic polytope, simplicial polytope, face lattice, $f$-vector, Euler's formula}

\begin{abstract}
According to Euler's relation any polytope $P$ has as many faces of even dimension as it has faces of odd dimension. As a generalization of this fact one can compare the number of faces whose dimension is congruent to $i$ modulo $m$ with the number of all faces of $P$ for some positive integer $m$ and for some $1\leq i \leq m$. We show some classes of polytopes for which the above proportion is asymptotically \vspace{-1.7mm} equal to $1/m$.
\end{abstract}
\maketitle
\section*{Introduction}
For any $d$-polytope $P$, we denote its $f$-vector by $f(P)$ whose $i^{th}$ component $f_i(P)$ is the number
of faces of dimension $i$ in $P$ for $i=-1,0,\ldots,d$. We put $f_{-1}(P)=f_d(P)=1$ for the improper faces $\emptyset$ and $P$ itself and $f_i(P)=0$, if $i > d$ or $i<-1$. 
A widely studied and important problem is the characterization of the set of all possible $f$-vectors of polytopes. A well-known necessary condition for an arbitrary $(d+2)$-tuple of integers to be the $f$-vector of some $d$-polytope is  Euler's relation. We give this relation in an unusual form in order to emphasize the main idea of this\vspace{1mm} paper. For any  $d$-polytope $P$ one has the equation
\begin{equation*}\label{eq1}
\sum_{i\equiv \,0 \hspace{-2mm}\mod \hspace{-0mm}2}\hspace{-2mm}f_i(P)=\sum_{i\equiv 1 \hspace{-2mm}\mod \hspace{-0mm}2}\hspace{-2mm}\vspace{1mm}f_i(P),
\end{equation*} 
in other words the number of  faces of even dimension in $P$ is equal to the number of  faces of odd  dimension. In general, for any positive integer $m$ and for any $d$-polytope $P$, we define its \emph{$m$-modular $f$-vector}  $f^{m}(P)$ to be the vector with $i^{th}$ component $f_i^m(P)$ equal to the number of faces whose dimension is congruent to $i$ modulo $m$ ($i=-1,0,\ldots,m-2$). In the special case $m=d+2$ the $m$-modular $f$-vector of $P$ is equal to the $f$-vector of $P$ and for $m=1$ we obtain the number of all faces of $P$. Using this notation, Euler's relation can be given in the following simple fo\vspace{-1mm}rm:
\begin{equation*}\label{eq2}
f_{-1}^{\hspace{0.41mm}2}(P)=f_0^{\hspace{0.41mm}2}(P)\vspace{1mm}.
\end{equation*}
The question necessarily arises from the above whether there exists some analogous relation for $m > 2$. Of course equality does not hold universally if $m > 2$. For example  the $3$-modular $f$-vector of the three dimensional cube is $(7, 9, 12)$, that is $f_{i}^{\hspace{0.41mm}3}(C)\neq f_j^{\hspace{0.41mm}3}(C)$ for $i,j\in \{-1,0,1\}$. Nevertheless the components of the $3$-modular $f$-vector of the $6$-simplex are approximately equal: $(22,21,21)$. Therefore it is reasonable to investigate the  classes of polytopes $\mathcal{P}$ for which the following statement holds. \emph{For any positive integer $m$ and for any $\varepsilon >0$ there exists a positive $K$ such that for all $P\in \mathcal{P}$ of dimension at least $K$ }

\begin{equation*}\label{eq3}
||f^m(P)/f^1(P)-( 1/m,\ldots, 1/m)\vspace{2mm}||<\varepsilon.
\end{equation*}
The aim of this paper is to present such classes of polytopes. The above statement probably holds for other natural classes of polytopes e.g. ordinary polytopes, cubical polytopes, possibly even for the class of all polytopes.

In order to be able to deal more efficiently with the $m$-modular $f$-vectors of polytopes we need the concept and some useful properties of \emph{doubly stochastic matrices}. A matrix is called \emph{positive (nonnegative)} if all  its entries are  positive (nonnegative).
A $m \times m$ nonnegative matrix $M$ is said to be \emph{doubly stochastic} if the sum of the entries in each row and in each column equals $1$. The product of doubly stochastic matrices is doubly stochastic. The following lemma can be interpreted as a special case of the main theorem of Baik and Bang  \cite{baba}. Here we provide a shorter direct proof by using general results on eigenvectors of positive matrices.
\begin{lemma}\label{lem1}Let $Q$ denote the (unique) doubly stochastic matrix $(a_{i,j})$ of order $m$ for which $a_{i,i+1}=1$ for $1\leq i \leq m-1$. If $J$ denotes the doubly stochastic matrix with all entries equal to $1/m$ and $I$ is the identity matrix of order $m$, then
\begin{equation*}\label{eq30}
\lim_{k\rightarrow \infty}\Big(\frac{aI+Q}{a+1}\Big)^k=J,\hspace{3mm}\text{where}\;a>0.
\end{equation*}
\end{lemma}
\begin{proof}First we show that if $M$ is a positive doubly stochastic matrix of order $m$, then
\begin{equation}\label{eq300}
\lim_{n\rightarrow \infty}M^n=J.
\end{equation} 
This fact can be deduced from Perron's theorem (see e.g. Theorem 8.2.8 of Horn and Johnson \cite{horn}). Namely, for any $m\times m$ positive matrix $A$ we have 
$$\displaystyle\lim_{n\rightarrow \infty }(\varrho(A)^{-1}A)^n=xy^T,$$
where $\varrho (A)$ is the spectral radius bounded as follows (see 8.1.22 of \cite{horn}):
\begin{equation}\label{111}\min_{i} \sum_{j} a_{ij} \le \varrho (A) \le \max_i \sum_{j} a_{ij}.\end{equation}
Furthermore, $x$ is a positive eigenvector of $A$ and $y$ is a positive eigenvector of  $A^T$, both corresponding to the eigenvalue $\varrho (A)$, such that $x^Ty=1.$ We obtain from (\ref{111}) that $\varrho (M)=1$, since  $\displaystyle\min_{i} \sum_{j} m_{ij} = \max_i \sum_{j} m_{ij}=1$ for the positive doubly stochastic matrix $M$.
The vectors $x=(1,1,\dotsc,1)$ and $y=(\frac 1m,\frac 1m,\dotsc,\frac 1m)$ clearly satisfy the above conditions of Perron's theorem, that is, $Mx=x$,  $M^Ty=y$ and $x^Ty=1$. Therefore we have
$\lim_{n\rightarrow \infty }M^n=xy^T=J$.
The convergence (\ref{eq300}) can also be proved directly without using Perron's theorem, see \cite{ego}.

It is easy to see that $(aI+Q)^k$ is a positive matrix if $k\geq m$ (compare to Theorem 6.2.24 of \cite{horn}), therefore the matrix $C^m=\big(\frac{aI+Q}{a+1}\big)^{m}$ is a positive doubly stochastic matrix. Hence the above result can be applied to $C^m$, that is, $\displaystyle \lim_{n \rightarrow \infty}C^{mn}=J.$ We may also express this fact by using the max norm of matrices: $$\lim_{n \rightarrow \infty}||C^{mn}-J||=0.$$  We show that for all $0\leq i\leq m$, $\displaystyle \lim_{n \rightarrow \infty}C^{(m)n+i}=J$. We recall that $JC^i=J$, so we have 
\begin{equation*}\label{eq60}
 ||C^{mn}C^i-J||=||C^{mn}C^i-JC^i||\leq ||C^{mn}-J||\cdot||C^i||.
\end{equation*} 
Because $||C^i||$ is a constant, we obtain that $\displaystyle \lim_{n \rightarrow \infty}C^{mn+i}=J$ for all $0\leq i\leq m$, consequently $C^k\rightarrow J$ as $k$ tends to infinity.
\end{proof}
The doubly stochastic matrix $Q$ in Lemma \ref{lem1} is  referred to as a \emph{cyclic permutation matrix}. In other words, if the cyclic permutation $\pi$ is defined by $\pi (v)=\pi \big((v_1,v_2,\ldots,v_n)\big)= (v_n,v_1,\ldots,v_{n-1})$ for any $v\in \mathbb{R}^n$, then  $\pi(v)$ can be given as $vQ$. Thus the vector $u=(v_n+v_1, v_1+v_2,\ldots,v_{n-1}+v_n)$ can be written as follows
\begin{equation}\label{eqer}
 u=vQ+v=v(Q+I).
\end{equation} 
This fact is used in the proofs of Theorem \ref{th1} and Theorem \ref{sp1}.
\section*{Pyramids, Bipyramids and Prisms}
Let $P_0$ be an arbitrary $(d-1)$-polytope, let $I$ be a segment not parallel to the affine hull of $P_0$. Let us recall that the vector-sum $P=P_0+I$ is said to be a \emph{prism} with base $P_0$. The convex hull of the union of $P_0$ and the segment $I$ is called a \emph{pyramid} with base $P_0$ if one endpoint of $I$ is in $P_0$ and it is called a \emph{bipyramid} with base $P_0$ if some interior point of $I$ belongs to the relative interior of $P_0.$
\begin{theorem}\label{th1}
Let $P$ be any polytope. Let us define a sequence $P_k$ of polytopes by the recursion \vspace{-2mm}
 \begin{equation}\label{eq35}\begin{split}
 \hspace{-12mm}(i)&\hspace{2mm}P_0=P\\
 \hspace{-12mm}(ii)&\hspace{2mm}P_{k+1} \hspace{1.3mm}\text{is a prism with base}\hspace{1.4mm} P_k.\\
\end{split}\end{equation}
For any positive integer $m$ 
\begin{equation*}\label{eq4}
\lim_{k\rightarrow \infty}\frac{f^m(P_k)}{f^1(P_k)}=\Big(\frac 1m,\ldots, \frac 1m\Big)\vspace{2mm}.
\end{equation*}
 \end{theorem}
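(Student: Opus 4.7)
The plan is to reduce the problem to a linear recurrence on the $f$-vectors of the $P_k$ and then extract the modular counts via a discrete Fourier transform on $m$-th roots of unity.

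First I would work out the $f$-vector recurrence for the prism construction. Every face of the prism $P_{k+1}$ is either the empty face, one of the two translates $F\times\{0\}$ or $F\times\{1\}$ of a nonempty face $F$ of $P_k$ (inheriting its dimension), or a \emph{side face} $F\times I$ of dimension $\dim F + 1$. Hence
\begin{equation*}
f_j(P_{k+1}) \;=\; 2f_j(P_k) + f_{j-1}(P_k), \qquad j\ge 0,
\end{equation*}
with the convention $f_{-1}(P_k)=0$ on the right-hand side, while the single empty face $f_{-1}(P_{k+1})=1$ is tracked separately.

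Next I would encode this in the generating polynomial $F(Q;x)=\sum_{j\ge 0} f_j(Q)\,x^j$, which turns the recurrence into the clean identity $F(P_{k+1};x)=(2+x)\,F(P_k;x)$, so that $F(P_k;x)=(2+x)^k F(P_0;x)$. The total face count including the empty face then equals $f^1(P_k)=1+F(P_k;1)=1+3^k F(P_0;1)$, and since $F(P_0;1)\ge f_0(P_0)\ge 1$, we have $f^1(P_k)=\Theta(3^k)$.

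The key step is the Fourier extraction of the $m$-modular components. Letting $\omega$ be a primitive $m$-th root of unity, the orthogonality identity $\frac{1}{m}\sum_{l=0}^{m-1}\omega^{l(j-i)}$, which is $1$ when $j\equiv i\pmod m$ and $0$ otherwise, yields
\begin{equation*}
f_i^m(P_k) \;=\; \varepsilon_i \;+\; \frac{1}{m}\sum_{l=0}^{m-1}\omega^{-li}\,(2+\omega^l)^k\,F(P_0;\omega^l),
\end{equation*}
where $\varepsilon_i=1$ if $i\equiv -1\pmod m$ and $0$ otherwise (the empty face contribution). The term $l=0$ contributes $\frac{1}{m}\cdot 3^k F(P_0;1)$ to every coordinate, while for $l\neq 0$ one computes $|2+\omega^l|^2 = 5+4\cos(2\pi l/m)<9$, so those terms are $o(3^k)$. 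Dividing by $f^1(P_k)=\Theta(3^k)$ then gives the limit $1/m$ in each coordinate, as required.

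The only subtlety I expect is bookkeeping around the empty face, which is not produced by the prism recursion in the same way as the other faces; once it is peeled off as an additive $O(1)$ perturbation, what remains is the routine eigenvalue estimate showing that $3$ is the unique dominant eigenvalue of the operator $2I+S$ (with $S$ the cyclic shift on $\mathbb{R}^m$) whose corresponding eigenvector $(1,\dots,1)$ is exactly the uniform distribution on residues.
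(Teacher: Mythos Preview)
Your proof is correct and is, at bottom, the same computation as the paper's, written in dual coordinates. The paper packages the prism recursion in matrix form, $f^m(P_k)-e_1=(f^m(P_0)-e_1)(2I+Q)^k$ with $Q$ the cyclic shift on $\mathbb{R}^m$, notes that $(2I+Q)/3$ is doubly stochastic, and then cites an external lemma to conclude that its powers converge to the constant matrix $J$. Your discrete Fourier transform is exactly the diagonalization of this circulant matrix: the eigenvalues of $2I+Q$ are your numbers $2+\omega^l$, the dominant eigenvalue $3$ (at $l=0$) has eigenvector $(1,\dots,1)$, and your estimate $|2+\omega^l|<3$ for $l\neq 0$ is the spectral gap that drives the convergence. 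What your route buys is self-containment and an explicit rate; what the paper's route buys is a reusable black box that it invokes again, in the same form, for the stacked and cyclic polytope theorems later on.
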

\begin{proof}
We have the following relation between the $f$-vector of the prism $P_{k}$ and the $f$-vector of its base $P_{k-1}$ (\vspace{1mm}see Gr\"unbaum \cite{grun} 4.4):
\begin{equation*}\label{eq5}\begin{split}
f_{-1}(P_{k})&=2\cdot f_{-1}(P_{k-1})-1, \hspace{3mm}\\
f_0(P_{k})&=2\cdot f_0(P_{k-1})+f_{-1}(P_{k-1})-1 \hspace{3mm}\text{and}\\
f_{i+1}(P_{k})&=2\cdot f_{i+1}(P_{k-1})+f_{i}(P_{k-1})\hspace{3mm}for\hspace{2mm}\vspace{1mm} i>0.\end{split}
\end{equation*}
Applying the above we obtain directly the following equations for the components of the corresponding $m$-modular $f$-vectors:
\begin{equation}\label{eq6}\begin{split}
f&_{-1}^m(P_{k})-1=2\cdot(f_{-1}^m(P_{k-1})-1)+f_{m-2}^m(P_{k-1}), \\
f&_{0}^m(P_{k})=2\cdot f_{0}^m(P_{k-1})+f_{-1}^m(P_{k-1})-1, \\
f&_{1}^m(P_{k})=2\cdot f_{1}^m(P_{k-1})+f_{0}^m(P_{k-1}), \\
\vdots\\
f&_{m-2}^m(P_{k})=2\cdot f_{m-2}^m(P_{k-1})+f_{m-3}^m(P_{k-1}).\end{split}
\end{equation}
 Let $e_1$ denote the unit vector $(1,0,\ldots,0)$. Using the equation \ref{eqer} and the notation of Lemma \ref{lem1}, the equations (\ref{eq6}) can be formulated concisely as
 \begin{equation}\label{eq0} f^m(P_{k})-e_1=(f^m(P_{k-1})-e_1)(2I+Q),\end{equation}
 or based on $P_0$ by the recursion \ref{eq35} as
 \begin{equation}\label{eq7}
 f^m(P_{k})-e_1=(f^m(P_{0})-e_1)(2I+Q)^{k}.
 \end{equation}
 For $m=1$ the equation \ref{eq7} yields $f^1(P_{k})-e_1=(f^1(P_{0})-e_1)\cdot 3^{k}$, hence the $k^{th}$ element of the sequence of polytopes of the Theorem \ref{th1} can be written as
 \begin{equation*}\label{eq8}
\frac{f^m(P_k)}{f^1(P_k)}=\frac{(f^m(P_{0})-e_1)(2I+Q)^k+e_1}{(f^1(P_{0})-1)\cdot 3^k+1}.  
\end{equation*}
The vector $(f^m(P_{0})-e_1)/(f^1(P_{0})-1)$ is a \emph{stochastic} vector (its components sum to 1) and $(2I+Q)^k/3^k$ is a doubly stochastic matrix, therefore 
\begin{equation}\label{eq9}
\lim_{k\rightarrow \infty}\frac{f^m(P_{0})-e_1}{f^1(P_{0})-1}\cdot \Big(\frac{2I+Q}{3}\Big)^k=\Big(\frac 1m,\ldots, \frac 1m\Big)\vspace{2mm}\vspace{-0.6mm}.
\end{equation}
This fact is an immediate consequence of Lemma \ref{lem1}. The convergence \ref{eq9} implies the assertion of Theorem \ref{th1}.
\end{proof}
 It can be proved in the same way that Theorem \ref{th1} remains valid if we replace  ''prism'' with ''pyramid'' or ''bipyramid'' in condition $(ii)$. As a special case of prisms and pyramids we can mention the d-cubes and d-simplices obtained by choosing a single point as the base $P_0$, therefore for any $m$ the  components of the $m$-modular $f$-vector of the d-cube (and d-simplex) are approximately equal if $d$ is large\vspace{-0.0mm} enough.
 
\section*{Simplicial and Simple \vspace{1mm}Polytopes} 
A $d$-polytope is called \emph{simplicial} if all facets are simplices. Dually,
a $d$-polytope is called \emph{simple} if each vertex is adjacent to exactly $d$ edges. Let $\mathcal{P}_s^d$ denote the family of simple $d$-polytopes. We will deal only with simple polytopes here, however, because of the duality, similar statements are also valid for simplicial polytopes. Possibly, the most remarkable theorem related to simple (simplicial) polytopes is the $g$-theorem. This result was first conjectured by McMullen \cite{mcm1}. The sufficency of McMullens's condition is proved by Billera and Lee \cite{bile}, the necessity is proved by Stanley \cite{sta}.
We give an immediate consequence of the g-theorem here, which can be applied directly to prove Theorem \ref{sp1}: For any $S\in \mathcal{P}_s^d$ there exists an integer vector $g$ such that \begin{equation}\label{eqs5}f(S)=(f_{-1},f_0,\ldots,f_d)=gL_d-gK_d+e_1, \end{equation} 
where $e_1$ is the unit vector $(1,0,\ldots,0)$ of size $d+2$
 and $L_d$ and $K_d$ are nonnegative matrices of size $(\left\lfloor \frac d2\right\rfloor+1)\times (d+2)$ defined by
$$L_d:=\Bigg(\binom{d+2-i}{j-1}\Bigg)_{1\leq i\leq d/2+1,\, 1\leq j \leq d+2}$$ 
$$K_d:=\vspace{2mm}\Bigg(\binom{i-1}{j-1}\Bigg)_{1\leq i\leq d/2+1,\, 1\leq j \leq d+2} $$
Actually, $L_d$ and $K_d$ are constructed from certain rows of Pascal's triangle, in other words, from the $f$-vectors of certain $d$-simplices. This observation turns out to be useful to deduce the following statement from the case of simplices.
\begin{theorem}\label{sp1}For any positive integer $m$ and for any $\varepsilon >0$ there exists a positive $K$ such that for any simple polytope $S$ of dimension at least $K$  \begin{equation*}\left\|\label{eqs4}\frac{f^m(S)}{f^1(S)}-\Big(\frac 1m,\ldots, \frac 1m\Big)\right\|<\varepsilon\vspace{2mm}. \end{equation*}  \end{theorem}
\begin{proof}Let us fix a positive integer $m$ and a positive $\varepsilon$. We apply the notation and methods of the proof of Theorem \ref{th1} 
 with some modification. In addition we put $A=I+Q$ (see Lemma \ref{lem1}). For a vector $v$ let $\sigma(v)$ denote the sum of the components of $v$ and $\delta (v)$ the difference between the maximum and the minimum components of $v$. Furthermore, we shall denote the \emph{reduced spread} $\frac{\delta(v)}{\sigma(v)}$ of the vector $v$ by $\delta^*(v)$.
Using this notation, our goal is to show that the reduced spread $\delta^*(f^m(S))$ is less than $2\varepsilon$ if the dimension of $S$ is large enough.
First we consider some simple facts about the $\delta$  and $\sigma$ functions. Clearly we have  $\delta(u+v)\leq \delta(u)+\delta(u)$ and $\delta(u-v)\leq \delta(u)+\delta(v)$. The $\delta$ function is invariant under a cyclic permutation, that is, $\delta(v)=\delta(vQ)$, consequently for any nonnegative integer $n$ we have $\delta(e_1A^{n+1})\leq 2\delta(e_1A^n)$. Indeed, $\delta(e_1A^{n+1})=\delta(e_1A^{n}Q+e_1A^n)\leq 2\delta(A^n)$. Therefore by induction on $i$ one obtains that for some $1\leq i \leq n$
\begin{equation}\label{eqss1}\delta(e_1A^n)\leq 2^i\delta(e_1A^{n-i}). \end{equation}
In the similar way one can show that for some $1\leq i \leq n$
\begin{equation}\label{eqss2}\sigma(e_1A^n)= 2^i\sigma(e_1A^{n-i}). \end{equation}
Now, we define the $m-$modular versions of the generator matrices $L_d$ and $K_d$ (involved in the $g$-theorem, see above) as follows: let $L^m_d$ and $K^m_d$ be the $(\left\lfloor \frac d2\right\rfloor+1)\times m$ matrices with entries \vspace{1.7mm} respectively
$$l_{i,j}:=\hspace{-3mm}\sum_{r\equiv j-1 \hspace{-2mm}\mod m}{\binom{d+2-i}{r}}\hspace{3mm} \text{and}\hspace{3mm} k_{i,j}:=\hspace{-3mm}\sum_{r\equiv j-1 \hspace{-2mm}\mod m}\vspace{1.6mm}{\binom{i-1}{r}} $$
for $1\leq i \leq d/2+1$ and $1\leq j \leq m.$   For \vspace{1.6mm}example
\[ L_5=\begin{pmatrix}
1&6&15&20&15&6&1\\
1&5&10&10&5&1&0\\
1&4&6&4&1&0&0\\
\end{pmatrix},\quad L^4_5=\begin{pmatrix}
16&12&16&20\\
6&6&10&10\\
2&4&6&4\\
\end{pmatrix}.
\] 
By the $g$-theorem, the $m$-modular $f$-vector of a simple $d$-polytope $S$ can be given in the following form:
\begin{equation}\label{eqs6}f^m(S)=gL^m_d-gK^m_d+e_1, \end{equation} for some integer vector $g=(g_1,\ldots,g_{\left\lfloor \frac d2\right\rfloor+1})$.
The rows of the matrices $L^m_d$ and $K^m_d$ can be given by using certain powers of the matrix $A$ (as in equation \ref{eq7}). Namely, $e_kL^m_d=e_1A^{d-k+2}$ and $e_kK^m_d=e_1A^{k-1}$, where $e_k$ is the $k^{th}$ unit vector (of the required size). Therefore we have
\begin{equation}\label{eqs7}f^m(S)=e_1+\sum_{k=1}^{\left\lfloor \frac d2\right\rfloor+1} g_ke_1(A^{d-k+2}-A^{k-1})\vspace{1.7mm}\end{equation}
Using equation \ref{eqs7} the reduced spread of $f^m(S)$ can be given as follows:
$$\delta^*(f^m(S))=
\frac{\delta\big(e_1+\sum g_ke_1(A^{d-k+2}-A^{k-1})\big)}{\sigma\big(e_1+\sum g_ke_1(A^{d-k+2}-A^{k-1})\big)}$$
For any $1\leq k \leq \lfloor \frac d2\rfloor+1$ we have
 $\delta\big(e_1(A^{d-k+2}-A^{k-1})\big)\leq \delta\big(e_1A^{d-k+2}\big)+\delta\big(e_1A^{k-1}\big)$ and $\sigma\big(e_1(A^{d-k+2}-A^{k-1})\big)= \sigma\big(e_1A^{d-k+2}\big)-\sigma\big(e_1A^{k-1}\big)$. Therefore we can bound the reduced spread $f^m(S)$ from above: 
 $$\delta^*(f^m(S))\leq 
 \frac{1+\sum g_k\cdot\big(\delta (e_1A^{d-k+2})+\delta (e_1A^{k-1})\big)}{1+\sum g_k\cdot\big(\sigma (e_1A^{d-k+2})-\sigma (e_1A^{k-1})\big)}$$
 For any $1\leq k \leq \lfloor \frac d2\rfloor+1$ applying the inequalities \ref{eqss1} and \ref{eqss2} we obtain that
 \[\begin{split} 2\delta (e_1A^{d-k+2})&\geq \delta (e_1A^{d-k+2})+\delta (e_1A^{k-1})\hspace{2mm}\text{and}\\
 \frac12\sigma (e_1A^{d-k+2})&\leq\sigma (e_1A^{d-k+2})-\sigma (e_1A^{k-1}).
  \end{split}\]
Therefore the reduced spread can be bounded as follows:
$$\delta^*(f^m(S))\leq \frac{1+\sum g_k  2\delta (e_1A^{d-k+2})}{\sum g_k\frac12\sigma (e_1A^{d-k+2})}\leq  6\cdot\frac{\sum g_k\delta (e_1A^{d-k+2})}{\sum g_k\sigma (e_1A^{d-k+2})}$$ 
Using again the inequalities \ref{eqss1} and \ref{eqss2} we have for any $1\leq k \leq \lfloor \frac d2\rfloor+1$
 \[\begin{split}\delta (e_1A^{d-k+2})&\leq 2^{\left\lfloor \frac d2\right\rfloor-k+1}\delta (e_1A^{d-\left\lfloor \frac d2\right\rfloor+1}) \hspace{2mm}\text{and}\\
 \sigma (e_1A^{d-k+2})&= 2^{\left\lfloor \frac d2\right\rfloor-k+1}\sigma (e_1A^{d-\left\lfloor \frac d2\right\rfloor+1}).
  \end{split}\]
Hence the reduced spread $\delta^*(f^m(S))$ is less than or equal to
$$ 6\cdot\frac{\delta (e_1A^{d-\left\lfloor \frac d2\right\rfloor+1})\cdot\sum g_k\cdot 2^{\left\lfloor \frac d2\right\rfloor-k+1}}{\sigma (e_1A^{d-\left\lfloor \frac d2\right\rfloor+1})\cdot\sum g_k \cdot 2^{\left\lfloor \frac d2\right\rfloor-k+1}}=6\cdot\delta^*(e_1A^{d-\left\lfloor \frac d2\right\rfloor+1}) $$
 where $e_1A^{d-\left\lfloor \frac d2\right\rfloor+1}$ is the $m$-modular $f$-vector of a $(d-\left\lfloor \frac d2\right\rfloor)-$dimensional simplex. By Lemma \ref{lem1}, the reduced spread $\delta^*(e_1A^{d-\left\lfloor \frac d2\right\rfloor+1})$ tends to zero as $d-\left\lfloor \frac d2\right\rfloor+1$ tends to infinity. Consequently, for $\frac 13  \varepsilon$ there exists a positive $K'$ such that $\delta^*(e_1A^{d-\left\lfloor \frac d2\right\rfloor+1})<\frac 13 \varepsilon$ if $d-\left\lfloor \frac d2\right\rfloor+1>K'$. We put $K=2K'$. If $d>K$, then $\delta^*(e_1A^{d-\left\lfloor \frac d2\right\rfloor+1})<\frac 13 \varepsilon$, therefore $\delta^*(f^m(S))<2\varepsilon.$
\end{proof}

\end{document}